\documentclass[a4paper,10pt]{article}
\usepackage[T1,T2A]{fontenc}
\usepackage[cp1251]{inputenc}
\usepackage[english]{babel}
\usepackage{mathtext}
\usepackage{amsfonts}
\usepackage{color}
\usepackage{mathrsfs}
\usepackage{mathtools}
\usepackage{amsmath}
\usepackage{amssymb}
\usepackage{esint}
\usepackage{enumerate}

\usepackage{amsfonts}
\usepackage{amsthm}

\AtBeginDocument{%
   \def\MR#1{}
}

\DeclareMathOperator{\BV}{BV}

\DeclareMathOperator{\Var}{V}
\DeclareMathOperator{\LVar}{LV}

\newcommand{\R}{\mathbb{R}}
\newcommand{\E}{\mathbb{E}}

\newcommand{\Set}[2]{\Big\{{#1}\,\Big|\;{#2}\Big\}}

\newcommand{\eq}[1]{\begin{equation}{#1}\end{equation}}

\DeclareMathOperator{\LCJ}{LCJ}

\newcommand{\var}{\Var}

\DeclareMathOperator{\Metr}{\mathcal{X}}

\newcommand{\Pair}{\mathcal{M}}

\newcommand{\Lip}{\operatorname{Lip}}

\renewcommand{\leq}{\leqslant}
\renewcommand{\geq}{\geqslant}

\newtheorem{theorem}{Theorem}[section]
\newtheorem{Cor}[theorem]{Corollary}
\newtheorem{Th}[theorem]{Theorem}

\title{A random Lipschitz function detecting jumps}
\author{Dmitriy Stolyarov}
\date{\today}

\begin{document}
\allowdisplaybreaks
\maketitle

\begin{abstract}
Recently, A. Tyulenev and the author studied the class of metric spaces~$\Metr$ such that for every mapping~$\gamma \colon [0,1]\to \Metr$, there exists a~$1$-Lipschitz function~$F\colon \Metr\to \R$ that catches the total variation of~$\gamma$, i.e. such that~$\var_{F\circ\gamma} \gtrsim \var_\gamma$. In this short note, we show that a metric space~$\Metr$ enjoys this property if and only if there exists a random~$1$-Lipschitz function~$f$ on~$\Metr$ such that~$\E |f(x) - f(y)| \gtrsim \rho(x,y)$ for every~$x$ and~$y$ in~$\Metr$.
\end{abstract}

\section{The~$\LCJ$ property}\label{S1}
Let~$\Metr=(\Metr,\rho)$ be a metric space, let~$\gamma \colon [0,1] \to \Metr$ be a mapping. The most natural way to define its total variation~$\Var_\gamma$ is to take the supremum of the quantities
\eq{\label{DefinitionOfVariation}
\sum\limits_{i=0}^{N-1} \rho\big(\gamma(t_{i}),\gamma(t_{i+1})\big), 
}
where~$0 \leq t_0 \leq t_1 \ldots \leq t_N \leq 1$ is an arbitrary collection of points. Denote the set of all mappings~$\gamma \colon [0,1] \to \Metr$ with finite total variation by~$\BV([0,1],\Metr)$. Another approach comes from the work of L.~Ambrosio who suggested a definition of~$\Metr$-valued mappings of bounded variation in~\cite{Ambrosio1990}. It rests on the principle that a mapping~$u\colon \Omega \to \R^{d_2}$,~$\Omega \subset \R^{d_1}$ is a domain, is of bounded variation if and only if its composition with any coordinate projection has bounded variation. Ambrosio's definition reads as follows: A mapping~$u\colon \Omega \to \Metr$ has bounded total variation if there exists a finite Borel measure~$\sigma$ on~$\Omega$ such that for any~$1$-Lipschitz function~$f\colon \Metr \to \R$,
\eq{
\big|D (f\circ u)\big|(A) \leq \sigma (A),\qquad A \ \text{is a Borel subset of } \Omega.
}
The symbol~$D$ denotes the distributional gradient. In particular,~$f\circ u$ should be a function of bounded variation. One may show that for one-dimensional mappings~$\gamma \colon [0,1]\to \Metr$, Ambrosio's definition coincides with the `classical' one. 

The measure~$\sigma$ is a mysterious object. Given a mapping~$u$, it is unclear how to construct~$\sigma$. In~\cite{OT2025},  R.~Oleinik and A.~Tyulenev raised the question whether one can get rid of~$\sigma$: What happens if we change Ambrosio's definition and simply require that~$f\circ \gamma$ has bounded variation for any~$1$-Lipschitz~$f\colon \Metr \to \R$? They showed that in the case of \emph{continuous} mappings~$\gamma$, the new definition gives the same class of curves as the classical one (and therefore, the same as Ambrosio's definition): A continuous mapping~$\gamma \colon [0,1] \to \Metr$ has bounded variation if and only if~$f\circ \gamma$ has bounded variation for every Lipschitz mapping~$f\colon \Metr \to \R$. What is more, for any continuous~$\gamma$ and any metric space~$\Metr$ there exists a~$1$-Lipschitz function~$f\colon \Metr \to \R$ such that~$\var_\gamma = \var_{f\circ \gamma}$. See~\cite{Bakhtin2025} for an alternative approach to the phenomenon. 

Slightly later, A. Tyulenev and the author in~\cite{StolyarovTyulenev2026} (see the short report~\cite{StolyarovTyulenev2026bis} as well) showed that the continuity assumption is crucial for the coincidence of the two definitions: There exist a mapping~$\gamma \colon [0,1] \to \ell_2$ with infinite total variation and such that~$f\circ \gamma$ is a mapping of bounded variation for every Lipschitz function~$f\colon \ell_2 \to \R$. This fact urged them to introduce the quantity
\eq{\label{LCJFormula}
\LCJ(\Metr) = \inf \Set{\frac{\LVar_\gamma}{\var_\gamma}}{\gamma\in \BV([0,1],\Metr), \ \Var_\gamma > 0},
}
where~$\LVar_\gamma$ is the Lipschitz variation of~$\gamma$:
\eq{\label{eqq.BLV}
\LVar_{\gamma}:=\sup \Set{\Var_{f \circ \gamma}}{f\colon \Metr \to \R \ \text{is $1$-Lipschitz}}.
}
The paper~\cite{StolyarovTyulenev2026} contains several examples of metric spaces for which the~$\LCJ$ constant is positive, as well as examples for which it vanishes. For instance,~$\LCJ(\ell_2) = 0$. However, it is unclear which functions~$f$ are optimal or almost optimal in the supremum defining~$\LVar_\gamma$.  The purpose of this note is to fill this gap. In a sense, the right formula was already present in Section~$5$ of~\cite{StolyarovTyulenev2026}, where the authors showed~$\LCJ(\Metr) > 0$ whenever~$\Metr$ is an ultrametric space. As it often happens in high-dimensional analysis, one should pick a random~$f$. Namely, in Section~$5$ of~\cite{StolyarovTyulenev2026}, the authors construct a random~$1$-Lipschitz function~$f$ on an ultrametric space~$\Metr$ such that
\eq{\label{RandomFunctionBound}
\E |f(x) - f(y)| \gtrsim \rho(x,y),\qquad \text{for any}\ x,y\in \Metr. 
} 
This immediately yields~$\LCJ(\Metr) > 0$: Given any mapping~$\gamma \in \BV([0,1],\Metr)$, find a partition~$0 \leq t_0 \leq t_1 \ldots \leq t_N \leq 1$ of the interval such that the value of the corresponding sum~\eqref{DefinitionOfVariation} almost equals the variation of~$\gamma$, notice that by~\eqref{RandomFunctionBound}
\eq{
\E \sum\limits_{i=0}^{N-1} \Big| f\big(\gamma(t_{i})\big) - f\big(\gamma(t_{i+1})\big)\Big| \gtrsim \Var_\gamma;
}
this implies the existence of the desired~$1$-Lipschitz function~$f$ such that~$\Var_{f\circ \gamma} \gtrsim \Var_\gamma$. 
The reverse implication presented in this note is slightly subtler.

\paragraph{Acknowledgement.} The author acknowledges the use of ChatGPT (OpenAI) during the development of the proof. All mathematical arguments were checked and are the responsibility of the author.


\section{Main theorem}\label{S2}

Denote the Lipschitz constant of a mapping~$f$ by~$\Lip f$. By a~$1$-Lipschitz function we always mean a function~$f$ with~$\Lip f \leq 1$ (the Lipschitz constant of a~$1$-Lipschitz function may be less than one). Let~$\Metr=(\Metr,\rho)$ be a metric space. 
Fix a point~$x_0\in\Metr$ and set
\eq{\label{eq.K-def}
K=\Set{f\colon\Metr\to\R}{f(x_0)=0,\quad \Lip f \leq 1}.
}
We equip~$K$ with the topology of pointwise convergence.  The choice of the
normalization~$f(x_0)=0$ is harmless, since adding a constant does not change
any difference~$|f(x)-f(y)|$. The space~$K$ is compact.  Indeed,
\eq{
|f(x)|=|f(x)-f(x_0)|\leq \rho(x,x_0),\qquad f\in K,
}
and therefore~$K$ is a closed subset of the product
\eq{
\prod_{x\in\Metr}[-\rho(x,x_0),\rho(x,x_0)].
}
Compactness of~$K$ follows from Tychonoff's theorem.  

\begin{Th}
\label{Th.random-lipschitz}
Let~$\Metr=(\Metr,\rho)$ be a separable metric space such that~$\LCJ(\Metr) \geq c$,~$c\in (0,1)$.  There exists a Borel probability measure~$\nu$ on~$K$
such that
\eq{\label{eq.random-catches}
\int_K |f(x)-f(y)|\,d\nu(f)\geq c\rho(x,y),\qquad \text{for any}\quad x,y\in\Metr.
}
\end{Th}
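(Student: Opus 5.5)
We will obtain $\nu$ by a minimax (Hahn--Banach) argument, using compactness of $K$. We first reduce to finitely many pairs. For a finite set $P=\{(x_j,y_j)\}_{j=1}^m$ of pairs in $\Metr$, let $\mathcal{P}_P$ be the set of Borel probability measures $\nu$ on $K$ such that
\[
\int_K |f(x_j)-f(y_j)|\,d\nu(f)\geq c\rho(x_j,y_j),\qquad j=1,\dots,m .
\]
Each evaluation $f\mapsto|f(x)-f(y)|$ is continuous on $K$. Hence $\mathcal{P}_P$ is a weak-$*$ closed subset of the weak-$*$ compact set of Radon probability measures on the compact space $K$. Moreover, $\mathcal{P}_{P\cup P'}\subset\mathcal{P}_P\cap\mathcal{P}_{P'}$. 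So once every $\mathcal{P}_P$ is nonempty, the finite intersection property gives a measure lying in all of them, which proves the theorem.

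Separability is not even needed at this stage. Still, one could alternatively use a countable dense set and Lipschitz continuity of both sides of \eqref{eq.random-catches} in $x,y$. (Weak-$*$ limits of measures with $\int|f(x)-f(y)|d\nu\ge c\rho(x,y)$ on a dense set extend by continuity, since $|f(x)-f(y)|$ is $2$-Lipschitz in $(x,y)$ uniformly in $f$.)

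For fixed $P$, I will apply von Neumann's minimax theorem. The game pairs measures $\nu$ on $K$ against weights $\lambda\in\Delta_m$ (the simplex), with payoff
\[
\Phi(\nu,\lambda)=\sum_j \lambda_j\Big(\int|f(x_j)-f(y_j)|\,d\nu-c\rho(x_j,y_j)\Big).
\]
This is bilinear, $\Delta_m$ is compact convex, and $\nu$ ranges over a compact convex set. So $\max_\nu\min_\lambda\Phi=\min_\lambda\max_\nu\Phi$. It therefore suffices to show the following: for every $\lambda\in\Delta_m$ there is a single $f\in K$ (a Dirac mass) with
\[
\sum_j\lambda_j|f(x_j)-f(y_j)|\geq c\sum_j\lambda_j\rho(x_j,y_j).
\]

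This deterministic weighted statement is the main step, and the $\LCJ$ hypothesis must be used through a suitable curve. By rational approximation, and using continuity of both sides in $\lambda$ together with compactness of $K$ to pass to limits, we may assume $\lambda_j=n_j/N$ with integers $n_j$. Build a step curve $\gamma\colon[0,1]\to\Metr$ that visits the points in the order
\[
x_1,y_1,x_1,y_1,\dots\ (n_1\text{ round trips}),\ x_2,y_2,\dots,
\]
with each value taken on a small interval. Its variation contains $2\sum_j n_j\rho(x_j,y_j)$, plus connecting terms $\rho(y_j,x_{j+1})$. The obstacle is that these connecting jumps contribute to $\var_\gamma$ but not to the target sum.

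To kill them, I repeat the whole block $M$ times in the $j$-th excursion: oscillate $Mn_j$ times between $x_j$ and $y_j$. The connecting terms stay $O(m\cdot\mathrm{diam})$ while the main terms grow like $M$. Then $\LCJ\ge c$ gives some $f\in K$ with $\var_{f\circ\gamma}\geq (c-\varepsilon)\var_\gamma$; we need $c$ exactly, handled below. For a step curve, $\var_{f\circ\gamma}$ equals the sum over consecutive values, namely
\[
2M\sum_j n_j|f(x_j)-f(y_j)|+O(1).
\]
The $O(1)$ holds because $f$ is $1$-Lipschitz, so each connecting term is bounded by the corresponding $\rho$. Dividing by $2MN$ and letting $M\to\infty$ gives the weighted inequality up to $\varepsilon$, where the connecting terms drop out. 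The $\varepsilon$-losses and the passage from rational to real $\lambda$ are absorbed by compactness. Take a limit point in $K$ of the near-optimal $f$'s as $\varepsilon\to0$ and $M\to\infty$. The left-hand side is continuous in $f$ pointwise, so the limit point achieves the inequality with constant exactly $c$.
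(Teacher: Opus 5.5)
Your argument is correct, and its core is the same duality as in the paper: a minimax between probability measures on $K$ and weights on pairs, with Dirac masses realizing the supremum. This reduces everything to the deterministic weighted inequality $\max_{f\in K}\sum_j\lambda_j|f(x_j)-f(y_j)|\geq c\sum_j\lambda_j\rho(x_j,y_j)$.

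The packaging differs. The paper handles all of $\Delta_{\rm fin}(\Pair)$ at once via Ky Fan's theorem with a non-compact $Y$, then attains the supremum by upper semicontinuity of $\nu\mapsto\inf_\alpha\Psi(\nu,\alpha)$. You instead fix finitely many pairs, use a minimax theorem with both sides compact, and glue the pieces by the finite intersection property. These two compactness arguments are interchangeable.

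The substantive difference is in the weighted inequality. The paper imports it as the finitely supported case of the measure form of the $\LCJ$ bound (Corollary~3.3 of the earlier paper). You prove it directly from the definition of $\LCJ$ with the oscillating step curve, where repeating each excursion $M$ times makes the connecting jumps negligible. This makes your proof self-contained, and it shows plainly that separability plays no role: Radon measures on the compact Hausdorff space $K$ suffice.

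Two minor remarks. First, since $\Delta(K)$ is infinite-dimensional, you need the Ky Fan or Sion form of the minimax theorem rather than the matrix-game one. Alternatively, push $\nu$ forward to $(\int|f(x_j)-f(y_j)|\,d\nu)_{j}\in\R^m$ and argue in finite dimensions. Second, the $\varepsilon$-loss is avoidable: for a step curve, $f\mapsto\var_{f\circ\gamma}$ is continuous on $K$, so the supremum defining $\LVar_\gamma$ is attained.
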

Equivalently, there exists a random $1$-Lipschitz function~$f\in K$ such that
\eq{
\E |f(x)-f(y)|\geq c\rho(x,y),\qquad  \text{for any}\quad x,y\in\Metr.
}
Since~$\Metr$ is separable, the
pointwise topology on~$K$ is metrizable. We restrict ourselves to separable metric spaces, which is sufficient for the applications considered here, since the image of every mapping of finite variation is separable.


\section{Useful tools}\label{S3}
In this small section, we collect the two known statements we will use in the proof of Theorem~\ref{Th.random-lipschitz}.

First, we will use the following measure
form of the lower bound~$\LCJ(\Metr)\geq c$: For every finite positive Borel
measure~$\mu$ on~$\Metr\times\Metr$ such that
\eq{
0<\int_{\Metr\times\Metr}\rho(x,y)\,d\mu(x,y)<\infty,
}
one has
\eq{\label{eq.measure-LCJ}
\sup_{\Lip f \leq 1}
\int_{\Metr\times\Metr}|f(x)-f(y)|\,d\mu(x,y)
\geq
c\int_{\Metr\times\Metr}\rho(x,y)\,d\mu(x,y).
}
For the proof of this formula, see Corollary~$3.3$ in~\cite{StolyarovTyulenev2026}.

Another tool is the classical minimax theorem from~\cite{KyFan1953}. We state a simplification. 
\begin{Th}[Ky Fan's minimax theorem]\label{KyFanTheorem}
Let~$X$ be a compact convex subset of a locally convex topological space. Let~$Y$ be a convex subset of a linear space. Let~$L\colon X\times Y \to \R$
be a function convex with respect to the first variable and
concave with respect to the second one. Assume that for every~$y\in Y$, the map~$x\mapsto L(x,y)$ is continuous on~$X$. Then,
\eq{
\min\limits_{x\in X}\sup\limits_{y\in Y} L(x,y)= \sup\limits_{y\in Y} \min\limits_{x\in X} L(x,y).
}
\end{Th}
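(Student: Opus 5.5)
The plan is the classical route: a compactness reduction to finitely many $y$'s, followed by a finite-dimensional separation argument. Note first that both sides make sense. For each $y$, the function $x\mapsto L(x,y)$ is continuous on the compact set $X$, so $\min_x L(x,y)$ is attained. The function $x\mapsto \sup_{y} L(x,y)$ is a supremum of continuous functions, hence lower semicontinuous, so it also attains its minimum on $X$ (possibly $+\infty$). The inequality
\[
\min_{x\in X}\sup_{y\in Y}L(x,y)\;\geq\;\sup_{y\in Y}\min_{x\in X}L(x,y)
\]
is trivial, since $\sup_{y} L(x,y)\geq L(x,y')\geq \min_{x''}L(x'',y')$ for every $x$ and every $y'$. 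It remains to prove the reverse inequality. Assume, to the contrary, that there is a real $t$ with $\sup_y\min_x L(x,y)<t<\min_x\sup_y L(x,y)$.

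\emph{Step 1: reduction to finitely many $y$'s.} For every $x\in X$ there is some $y$ with $L(x,y)>t$. Hence the sets $U_y=\{x\in X: L(x,y)>t\}$, which are open by continuity in $x$, cover $X$. By compactness we can pick $y_1,\dots,y_n$ with $\max_i L(x,y_i)>t$ for all $x\in X$. The function $x\mapsto\max_iL(x,y_i)$ is continuous, so its minimum over $X$ is some $t'>t$. Set $s=(t+t')/2$.

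\emph{Step 2: separation in $\R^n$.} Define $\Phi\colon X\to\R^n$ by $\Phi(x)=(L(x,y_1),\dots,L(x,y_n))$; it is continuous, so $\Phi(X)$ is compact. Put $A=\Phi(X)+\R^n_+$.
\begin{itemize}
\item $A$ is convex. Given $x,x'\in X$ and $\mu\in[0,1]$, convexity of $L$ in the first variable gives $\Phi(\mu x+(1-\mu)x')\leq \mu\Phi(x)+(1-\mu)\Phi(x')$ coordinatewise. Since $X$ is convex, any convex combination of points of $A$ dominates a point of $\Phi(X)$, and so lies in $A$.
\item $A$ is closed, being the sum of a compact set and a closed set.
\item $A$ is disjoint from the closed convex set $D=s\mathbf 1-\R^n_+$. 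Indeed, every point of $A$ has some coordinate $\geq t'>s$.
\end{itemize}
Since $D$ has nonempty interior, the finite-dimensional separation theorem gives $\lambda\neq0$ and $\alpha\in\R$ with $\lambda\cdot v\geq\alpha\geq\lambda\cdot w$ for all $v\in A$ and $w\in D$. Because $D$ is unbounded below in every coordinate direction, $\lambda\geq0$ coordinatewise, and we may normalize $\sum\lambda_i=1$. Testing $w=s\mathbf 1\in D$ gives $\alpha\geq s$. Therefore $\sum_i\lambda_iL(x,y_i)\geq s$ for all $x\in X$.

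\emph{Step 3: conclusion via concavity.} Let $\bar y=\sum_i\lambda_iy_i$, which lies in $Y$ by convexity of $Y$. Concavity of $L$ in the second variable gives, for every $x\in X$,
\[
L(x,\bar y)\;\geq\;\sum_i\lambda_iL(x,y_i)\;\geq\; s.
\]
Hence $\min_xL(x,\bar y)\geq s>t>\sup_y\min_xL(x,y)$, a contradiction.

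The main obstacle is Step 2. One has to arrange the separation so that the normal vector $\lambda$ is nonnegative, so that it can be read as convex weights, and so that the bound obtained is strict. That is why we work with the upward closure $A$ rather than $\Phi(X)$ itself, which need not be convex, and why we use the gap $s<t'$ coming from compactness. Local convexity of the ambient space is not actually needed in this argument; only compactness of $X$ and continuity in $x$ are used.
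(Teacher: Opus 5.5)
Your proof is correct. The paper gives no proof of this statement: it quotes a simplified form from Ky Fan's 1953 paper and uses it as a black box. You instead give a self-contained argument, and it is essentially the classical argument behind Fan's theorem. You use a finite subcover to reduce to finitely many points $y_1,\dots,y_n$. You then separate the upward closure $\Phi(X)+\R^n_+$ from the lower orthant $s\mathbf 1-\R^n_+$ to obtain convex weights $\lambda_i$, and concavity in $y$ turns these weights into the single point $\bar y$.

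What your route buys is transparency about the hypotheses. As you observe, local convexity of the ambient space plays no role. In fact, lower semicontinuity of $x\mapsto L(x,y)$ would suffice in place of continuity: the sets $U_y$ are still open, and both minima are still attained. The finite-dimensional step can also be trimmed. Weak separation of disjoint convex sets in $\R^n$ requires neither closedness of $A$ nor the auxiliary level $s$, because $t\mathbf 1-\R^n_+$ is already disjoint from $A$. So the strictness issue you flag as the main obstacle is milder than you suggest.

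The citation, of course, buys brevity. For the paper's application either version is enough, since there $\nu\mapsto\Psi(\nu,\alpha)$ is continuous and affine on the compact convex set $\Delta(K)$.
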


\section{Proof of Theorem~\ref{Th.random-lipschitz}}\label{S4}

Let
\eq{
\Pair=\Set{(x,y)\in\Metr\times\Metr}{x\neq y}
}
and define
\eq{\label{eq.Phi-def}
\Phi(x,y,f)=\frac{|f(x)-f(y)|}{\rho(x,y)},\qquad (x,y)\in\Pair,\quad f\in K.
}
For every fixed~$(x,y)\in\Pair$, the function~$f\mapsto\Phi(x,y,f)$ is
continuous on~$K$, and
\eq{
0\leq \Phi(x,y,f)\leq 1.
}

Denote by~$\Delta_{\rm fin}(\Pair)$ the set of finitely supported probability
measures on~$\Pair$, i.e. finite convex combinations of the Dirac deltas.  After the following change of density
\eq{
\alpha =\frac{\rho}{\int_{\Pair}\rho\,d\mu} \cdot \mu,
}
the estimate~\eqref{eq.measure-LCJ}, restricted to finitely supported measures,
becomes
\eq{\label{eq.game-form}
\inf_{\alpha\in\Delta_{\rm fin}(\Pair)}
\sup_{f\in K}
\int_{\Pair}\Phi(x,y,f)\,d\alpha(x,y)
\geq c.
}
Indeed, if~$\alpha\in\Delta_{\rm fin}(\Pair)$ is given, then taking
$\mu=\alpha/\rho$ gives the reverse change of variables.

Let~$\Delta(K)$ be the space of Borel probability measures on~$K$, equipped
with the weak-* topology.  Since~$K$ is compact metrizable,~$\Delta(K)$ is a
compact convex subset of the locally convex space~$C(K)^*$.  Define
\eq{\label{eq.Psi-def}
\Psi(\nu,\alpha)
=
\int_{\Pair}\int_K \Phi(x,y,f)\,d\nu(f)\,d\alpha(x,y),
\qquad
\nu\in\Delta(K),\quad \alpha\in\Delta_{\rm fin}(\Pair).
}
For every~$\alpha\in\Delta_{\rm fin}(\Pair)$, the map
$\nu\mapsto\Psi(\nu,\alpha)$ is continuous and affine on~$\Delta(K)$.
For every~$\nu\in\Delta(K)$, the map
$\alpha\mapsto\Psi(\nu,
\alpha)$ is affine on~$\Delta_{\rm fin}(\Pair)$.

Moreover, for fixed~$\alpha$,
\eq{\label{eq.dirac-max}
\sup_{\nu\in\Delta(K)}\Psi(\nu,\alpha)
=
\sup_{f\in K}\int_{\Pair}\Phi(x,y,f)\,d\alpha(x,y),
}
because a linear functional on~$\Delta(K)$ attains its supremum on
Dirac measures.  Hence~\eqref{eq.game-form} can be rewritten as
\eq{\label{eq.inf-sup-Psi}
\inf_{\alpha\in\Delta_{\rm fin}(\Pair)}
\sup_{\nu\in\Delta(K)}\Psi(\nu,
\alpha)
\geq c.
}

We now apply Theorem~\ref{KyFanTheorem} in the form
\eq{\label{eq.Ky-Fan}
\sup_{\nu\in\Delta(K)}
\inf_{\alpha\in\Delta_{\rm fin}(\Pair)}\Psi(\nu,
\alpha)
=
\inf_{\alpha\in\Delta_{\rm fin}(\Pair)}
\sup_{\nu\in\Delta(K)}\Psi(\nu,
\alpha);
}
i.e. we use~$-\Psi$ in the role of the function~$L$.
Combining~\eqref{eq.inf-sup-Psi} and~\eqref{eq.Ky-Fan}, we obtain
\eq{
\sup_{\nu\in\Delta(K)}
\inf_{\alpha\in\Delta_{\rm fin}(\Pair)}\Psi(\nu,
\alpha)
\geq c.
}
The function
\eq{
\nu\mapsto \inf_{\alpha\in\Delta_{\rm fin}(\Pair)}\Psi(\nu,
\alpha)
}
is upper semicontinuous, being the infimum of continuous functions.  Since
$\Delta(K)$ is compact, the supremum is attained.  Thus, there exists
$\nu_0\in\Delta(K)$ such that
\eq{\label{eq.nu0-finite-measures}
\Psi(\nu_0,
\alpha)
\geq c,
\qquad \text{for all}\quad \alpha\in\Delta_{\rm fin}(\Pair).
}
Taking~$\alpha=\delta_{(x,y)}$ in~\eqref{eq.nu0-finite-measures}, we obtain
\eq{
\int_K\Phi(x,y,f)\,d\nu_0(f)
\geq c,
\qquad \text{for all}\ (x,y)\in\Pair.
}
Using the definition~\eqref{eq.Phi-def}, this is exactly
\eq{
\int_K |f(x)-f(y)|\,d\nu_0(f)
\geq c\rho(x,y),
\qquad x\neq y.
}
For~$x=y$ the same estimate is trivial.  Hence~$\nu:=\nu_0$ satisfies
\eqref{eq.random-catches}.


\section{A corollary}\label{S5}
\begin{Cor}
If~$\Metr$ is separable and~$\LCJ(\Metr) > 0$, then~$\Metr$ admits a bi-Lipschitz embedding into~$L_1$.
\end{Cor}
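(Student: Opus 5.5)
The plan is to turn the random function of Theorem~\ref{Th.random-lipschitz} into an embedding by sampling it, i.e. to realize $\Metr$ inside $L_1(K,\nu)$ itself. Let $c=\LCJ(\Metr)>0$ (if $c\geq 1$ replace it by any number in $(0,1)$), and let $\nu$ be the Borel probability measure on $K$ provided by Theorem~\ref{Th.random-lipschitz}. Define
\eq{
T\colon \Metr\to L_1(K,\nu),\qquad T(x)=\big(f\mapsto f(x)\big).
}
For each fixed $x$, the evaluation map $f\mapsto f(x)$ is continuous on $K$ in the pointwise topology, hence Borel measurable. It is also bounded by $\rho(x,x_0)$. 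Therefore $T(x)$ belongs to $L_\infty(\nu)\subset L_1(\nu)$.

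Both Lipschitz bounds are now immediate. Every $f\in K$ is $1$-Lipschitz, so $|f(x)-f(y)|\leq\rho(x,y)$ pointwise on $K$, and integrating gives
\eq{
\|T(x)-T(y)\|_{L_1(\nu)}=\int_K|f(x)-f(y)|\,d\nu(f)\leq \rho(x,y).
}
The lower bound is exactly \eqref{eq.random-catches}: $\|T(x)-T(y)\|_{L_1(\nu)}\geq c\rho(x,y)$. Hence $T$ is a bi-Lipschitz embedding with distortion at most $1/c$.

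The only step that needs care is identifying $L_1(K,\nu)$ with a subspace of the standard $L_1=L_1[0,1]$. Since $\Metr$ is separable, $K$ is a compact metrizable space, so $\nu$ is a Borel probability measure on a compact metric space. Consequently $L_1(K,\nu)$ is separable. A separable $L_1(\mu)$ space embeds isometrically into $L_1[0,1]$, by the classical measure-algebra (Carathéodory) isomorphism, after splitting off atoms, which contribute an $\ell_1$ piece.

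Alternatively, the target can simply be understood as some $L_1$ space, and then no identification is needed. Either way, composing $T$ with this isometry yields the desired bi-Lipschitz embedding into $L_1$.
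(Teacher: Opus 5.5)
Your proposal is correct and follows the same route as the paper: you map $x$ to the evaluation function $f\mapsto f(x)$ in $L_1(K,\nu)$, you get the upper bound because every $f\in K$ is $1$-Lipschitz, and you get the lower bound directly from \eqref{eq.random-catches}. The additional points you supply are details the paper leaves implicit, and they are sound: measurability of the evaluation map, replacing $c$ by a number in $(0,1)$, and the isometric identification of the separable space $L_1(K,\nu)$ with a subspace of $L_1[0,1]$.
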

\begin{proof}
Assume~$\LCJ(\Metr) > 0$. Theorem~\ref{Th.random-lipschitz} provides us with the probability measure~$\nu$ on~$K$. Any point~$x\in \Metr$ defines a function on~$K$ by the rule~$K\ni f \mapsto f(x)\in \R$. Inequality~\eqref{eq.random-catches} says this mapping is a bi-Lipschitz embedding of~$\Metr$ into~$L_1(\nu)$.
\end{proof}
In particular, the Heisenberg group~$\mathbb{H}^1$ with the Carnot--Carath\'eodory metric does not have the~$\LCJ$ property since it does not admit a bi-Lipschitz  embedding into~$L_1$, see~\cite{CheegerKleiner2010}. This is yet another example of a doubling metric space that does not possess the~$\LCJ$-property; the first one may be found among the so-called Laakso spaces, see~\cite{StolyarovTyulenev2026}.


\bibliography{/Users/mac/Documents/Bib/Mybib_26_7}{}
\bibliographystyle{amsplain}

St. Petersburg State University, Department of Mathematics and Computer Science;

d.m.stolyarov at spbu dot ru.

\end{document}